\documentclass[11pt]{article}
\usepackage[T1]{fontenc}
\usepackage[utf8]{inputenc}
\usepackage{lmodern}
\usepackage{microtype}
\usepackage[a4paper,margin=30mm]{geometry}
\usepackage{amsmath,amssymb,amsthm,mathtools}
\usepackage{array}
\usepackage{booktabs}
\usepackage{enumitem}
\usepackage[hidelinks]{hyperref}
\usepackage[nameinlink,noabbrev]{cleveref}

\newtheorem{theorem}{Theorem}[section]
\newtheorem{proposition}[theorem]{Proposition}
\newtheorem{lemma}[theorem]{Lemma}
\newtheorem{corollary}[theorem]{Corollary}

\theoremstyle{definition}
\newtheorem{definition}[theorem]{Definition}

\theoremstyle{remark}

\DeclareMathOperator{\Aut}{Aut}

\title{An infinite family of intransitive \\ directed strongly regular graphs\\
       with rank 6 Weisfeiler--Leman closure}
\author{\v{S}tefan Gy\"urki\footnote{e-mail address: {\tt stefan.gyurki@stuba.sk}}\\[0.1cm]
{\small Slovak University of Technology, Bratislava, Slovak Republic} }
\date{\today}

\begin{document}

\maketitle

\begin{abstract}
We construct an infinite family of directed strongly regular graphs (DSRGs)
with intransitive full automorphism groups whose Weisfeiler--Leman closure
are association schemes of rank 6.  This is the smallest
possible rank for an association scheme admitting a proper DSRG merging.
We also exhibit rigid sporadic DSRGs with the same closure property, showing
that combinatorial regularity, group-theoretic symmetry, and
Weisfeiler--Leman regularity capture fundamentally different aspects of
graph structure.
\end{abstract}

\medskip
\noindent\textbf{Keywords:}
directed strongly regular graph; doubly regular tournament; association
scheme; coherent configuration; Weisfeiler--Leman closure; non-Schurian
scheme.

\medskip
\noindent\textbf{Mathematics Subject Classification (2020):}
05C20, 05C25, 05E30, 05B20.

\section{Introduction}
\label{intro}
Symmetries of the graphs can be quantified in many different manners: number of automorphisms, combinatorial regularity properties, rank of the Weisfeiler-Leman closure (WL-closure), etc. 
In this paper we show an example of an infinite family of digraphs that are strongly regular, having the smallest possible rank of their WL-closure, but not vertex-transitive. 
We also exhibit rigid sporadic DSRGs with the same closure property, showing
that combinatorial regularity, group-theoretic symmetry, and
Weisfeiler--Leman regularity capture fundamentally different aspects of
graph structure.

The present work, surprisingly, arose as a byproduct of a systematic search for vertex-transitive directed strongly regular graphs (DSRGs) \cite{bg, gs}.  Our
original objective was to find vertex-transitive DSRGs among unions of relations in association schemes.  
The search also produced a DSRG
on $30$ vertices whose full automorphism group is intransitive, although its 
WL-closure is homogeneous.  This is the smallest example with these properties in our exhaustive search. Even more remarkably, its
WL-closure has rank 6, which is the smallest rank possible for a noncommutative association scheme admitting a proper DSRG as a merging
\cite{kmmz}.  
The next smallest example of an intransitive DSRG having homogeneous WL-closure with rank 6 appear on 38 vertices.  
The similarity between these two examples suggested that the phenomenon is
not accidental, thus we started to investigate whether is it possible to construct an infinite family of DSRGs that are not vertex-transitive, but having the 
smallest possible rank for their WL-closure. 
A careful analysis revealed that both graphs are instances
of a uniform construction based on doubly regular tournaments. As a consequence, based on the Szekeres tournaments we prove the main result of the paper:

\begin{theorem}
\label{thm:main}
For every prime $p\equiv5\pmod8$, $p\geq13$, there exists an intransitive
directed strongly regular graph with parameters
$(4p+2,2p,p,p-1,p)$
whose Weisfeiler--Leman closure is a homogeneous noncommutative association
scheme of rank six.  Consequently, there exist infinitely many intransitive
DSRGs having homogeneous noncommutative WL-closure of the smallest possible
rank.
\end{theorem}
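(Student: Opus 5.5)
The plan is to realize the graph as a merging of relations in an explicit rank-6 scheme built from two copies of a doubly regular tournament, and then to destroy vertex-transitivity with the Szekeres tournament.

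\emph{Step 1 (tournament and parameters).} For a prime $p\equiv5\pmod8$, the Szekeres construction gives a doubly regular tournament $T$ of order $2p+1$ whose adjacency matrix $A$ satisfies $AJ=JA=pJ$ and $A+A^{\top}=J-I$. Double regularity gives
\[
A^2=\tfrac{p-1}{2}(J-I)+\ldots
\]
which is the usual doubly regular identity. On $V=\{0,1\}\times V(T)$, so that $|V|=4p+2$, I will define the block matrix
\[
M=\begin{pmatrix} A & I+A^{\top}\\ A^{\top}& A\end{pmatrix}
\]
or a close variant obtained by adjusting the diagonal blocks. I will choose the signs and transposes by requiring $MJ=2pJ$ and $M^2=pI+(p-1)M+p(J-I-M)$, and check this by block multiplication using $A^2$, $AA^{\top}$ and $A+A^{\top}=J-I$. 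The candidate is guided by the two computer examples on $30$ and $38$ vertices, and the exact blocks will be read off from them.

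\emph{Step 2 (rank-6 scheme).} I will introduce six $0/1$ matrices partitioning $J$, built from the blocks $I$, $A$, $A^{\top}$ placed diagonally and antidiagonally. Their span should contain $M$ and $M^{\top}$. I will verify that this span is closed under multiplication and transposition, again by block computations that use only the doubly regular identities. This shows the span is a homogeneous coherent algebra of rank $6$. It is noncommutative because the diagonal and antidiagonal parts fail to commute: for example, $\begin{pmatrix}0&I\\I&0\end{pmatrix}$ conjugates $A$ to $A^{\top}\neq A$. It follows that $\mathrm{WL}(M)$ is contained in this scheme.

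\emph{Step 3 (equality of the closure).} By the result cited from \cite{kmmz}, a noncommutative scheme admitting a proper DSRG merging has rank at least $6$. $\mathrm{WL}(M)$ is homogeneous because $M$ is regular with $M^2$ in the span of $\{I,J,M\}$. It is noncommutative because $MM^{\top}\neq M^{\top}M$, which is checked blockwise. Hence $\mathrm{WL}(M)$ is a noncommutative scheme with $M$ as a proper merging, so it has rank at least $6$. Combined with the containment from Step 2, the closure equals the six-relation scheme.

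\emph{Step 4 (intransitivity), the main obstacle.} Every automorphism of $M$ preserves the coherent closure. It therefore permutes the relations, and in particular preserves or swaps the two fibers $\{0\}\times V(T)$ and $\{1\}\times V(T)$. It then induces an automorphism or antiautomorphism of $T$. So $\Aut(M)$ is contained in an extension of index at most $4$ of $\Aut(T)$, by the swap and by transposition. For the Szekeres tournament with $p\ge13$, I will use the known fact that $\Aut(T)$ is small and fixes a distinguished vertex $\infty$, its orbits being $\{\infty\}$ and a union of orbits of the squares acting on $\mathrm{GF}(p)^2$. Transitivity on $4p+2$ points would force $\Aut(T)$, possibly extended by an antiautomorphism, to be transitive on $2p+1$ points, which is impossible. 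The delicate part is showing that the coherent structure really forces each automorphism to be fiber-preserving up to a swap. I expect this to follow from the six relations splitting as diagonal-block versus off-diagonal-block relations of distinct valencies. I will also have to rule out antiautomorphisms of $T$ mixing $\infty$ with the other vertices, citing the structure of $\Aut$ of Szekeres tournaments. The case $p=5$ is excluded because there the tournament is more symmetric. Infinitely many such primes exist by Dirichlet's theorem.
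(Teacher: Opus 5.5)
\textbf{Main gap: the digraph is never constructed.} Your overall plan matches the paper's: double a doubly regular tournament, place the result in a rank-six block scheme, identify the closure, push automorphisms down to $\Aut^{\pm}(T)$, then invoke Ito and Dirichlet. But the step that carries the existence claim is missing, because you never specify the digraph. Your one explicit candidate $M=\left(\begin{smallmatrix}A&I+A^{\mathsf T}\\A^{\mathsf T}&A\end{smallmatrix}\right)$ is not even regular. The first $2p+1$ rows have sum $p+(p+1)=2p+1$, while the last $2p+1$ rows have sum $2p$, so $MJ\neq 2pJ$. Deferring to ``a close variant whose blocks will be read off from the computer examples'' is not a proof, and Steps 2--4 all depend on which matrix it is.

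The matrix that works is
\[
\begin{pmatrix}B&B^{\mathsf T}\\B&B^{\mathsf T}\end{pmatrix},
\]
where $B$ is the adjacency matrix of $T$. Your missing term in $A^2$ is $+A^{\mathsf T}$, i.e.\ $B^2=qB+(q+1)B^{\mathsf T}$ with $q=(p-1)/2$. Combining this with $BB^{\mathsf T}=B^{\mathsf T}B=(q+1)I+qJ$ gives $B^2+B^{\mathsf T}B=pJ-B$ and $BB^{\mathsf T}+(B^{\mathsf T})^2=pJ-B^{\mathsf T}$. Hence $A^2+A=pJ$, which yields the parameters $(4p+2,2p,p,p-1,p)$. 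This matrix equals $R_2+R_4$, where $R_2=\left(\begin{smallmatrix}0&B^{\mathsf T}\\B&0\end{smallmatrix}\right)$ (symmetric) and $R_4=\operatorname{diag}(B,B^{\mathsf T})$. The scheme is completed by $R_0=I$, $R_1$ the layer swap, $R_3=\left(\begin{smallmatrix}0&B\\B^{\mathsf T}&0\end{smallmatrix}\right)$ and $R_5=R_4^{\mathsf T}$. Noncommutativity is witnessed by $R_1R_4R_1=R_5\neq R_4$.

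\textbf{Step 3: homogeneity.} Homogeneity of $\operatorname{WL}(M)$ does not follow from $M$ being regular with $M^2\in\operatorname{span}\{I,J,M\}$. Nothing in the DSRG identities forces the closure to have a single fibre, and the paper stresses that closures need not be homogeneous. Homogeneity does follow from the containment: a coherent subalgebra of $\mathbb C\mathcal X(T)$ contains no diagonal matrices other than multiples of $I$. With that repair, your rank count is a valid alternative to the paper's argument. The count is: $\operatorname{WL}(M)$ is a fusion of a rank-six scheme, and it has rank at least six by Proposition~\ref{ranksix}, so the two coincide. The paper instead recovers the basis matrices explicitly via $R_2=A\circ A^{\mathsf T}$, $R_4=A-R_2$, $R_5=A^{\mathsf T}-R_2$ and $R_3=\frac1{q+1}(R_2R_4-qR_2)$. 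Your route trades that computation for the cited lower bound.

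\textbf{Step 4: layers and anti-automorphisms.} The layers are not fibres, since the scheme has one fibre. A valency argument also cannot separate them, because $R_2,\dots,R_5$ all have valency $p$. The correct reason is that every automorphism fixes each basis relation of the closure. It therefore preserves the matching $R_1$ and the equivalence relation $R_0\cup R_4\cup R_5$, whose classes are the layers. Since $R_4$ restricts to $B$ on one layer and to $B^{\mathsf T}$ on the other, a layer-preserving automorphism induces an automorphism of $T$, and a layer-swapping one induces an anti-automorphism.

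Finally, you need no knowledge of the anti-automorphisms of $S_p$. Because $\Aut(T)\trianglelefteq\Aut^{\pm}(T)$ with index at most $2$, a transitive $\Aut^{\pm}(T)$ would leave $\Aut(T)$ with either one orbit or two orbits of equal size. Two equal orbits are impossible on $2p+1$ points, so Ito's result that $\Aut(S_p)$ is intransitive suffices.
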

The infinitude follows from \emph{Dirichlet's theorem} on primes in arithmetic
progressions.

The paper is organized as follows.  In Section~\ref{prelim} we
recall the necessary facts about DSRGs, coherent configurations,
Weisfeiler--Leman closure, and doubly regular tournaments.
Section~\ref{sec:main} contains the doubling construction, the
proof that its WL-closure has rank six, the analysis of its automorphisms, and
the application to the Szekeres tournaments.  
We also briefly discuss the smallest rigid DSRGs obtained by the doubling construction. 

\section{Preliminaries}
\label{prelim}

Throughout the paper, all graphs, digraphs, groups and configurations are
finite.  For a positive integer $n$, we denote by $I=I_n$ and $J=J_n$ the
identity matrix and the all-one matrix of order $n$, respectively.  Matrix
transpose is denoted by $A^{\mathsf T}$ and entrywise (Schur-Hadamard) 
multiplication by $A\circ B$.  
If $X$ is a set, then $\operatorname{Sym}(X)$ denotes its full
symmetric group.

\subsection{Directed strongly regular graphs}

A digraph $\Gamma=(V,\mathcal A)$ consists of a finite vertex set $V$ and a
set of arcs
$ \mathcal A\subseteq V\times V$.
All digraphs considered here are loopless, so $(x,x)\notin\mathcal A$ for
every $x\in V$.  Both arcs $(x,y)$ and $(y,x)$ are allowed to occur.  In this
case, $x$ and $y$ are joined by an \emph{undirected edge}.  The adjacency
matrix $A=A(\Gamma)$ is the $(0,1)$-matrix indexed by $V$ for which
$A_{xy}=1\iff (x,y)\in\mathcal A$.
The \emph{reverse} digraph $\Gamma^{\mathsf T}$ has adjacency matrix
$A^{\mathsf T}$.

Directed strongly regular graphs were introduced by Duval
\cite{du} as a directed analogue of strongly regular graphs.  

\begin{definition}
A loopless digraph $\Gamma$ is a \emph{directed strongly regular graph} 
(DSRG, for short) with parameters $(v,k,t,\lambda,\mu)$ if $|V|=v$ 
and its adjacency matrix satisfies
\[ AJ=JA=kJ\quad \text{ and }  A^2=tI+\lambda A+\mu(J-I-A).\]
\end{definition}

The first equality means that every vertex has both in-degree
and out-degree $k$.  The diagonal entries in the second equality show
that every vertex has exactly $t$ neighbours joined to it in both
directions.  For two distinct vertices $x$ and $y$, the corresponding
off-diagonal entry states that the number of directed walks of length
two from $x$ to $y$ is $\lambda$ if $x\to y$, and $\mu$ otherwise.
A DSRG is called \emph{proper} (or also \emph{genuine}) if $0<t<k$.
The cases $t=k$ and $t=0$ include undirected strongly regular graphs and
doubly regular tournaments, respectively.  We follow the parameter order
$(v,k,t,\lambda,\mu)$ used in \cite{du}.
 Since their introduction, DSRGs have been studied by spectral, group-theoretic, design-theoretic and algebraic methods; see, for example,
\cite{du,fkm,fkp,gh,gp,hs,jo,kmmz}.

The automorphism group $\Aut(\Gamma)$ consists of all permutations of $V$
preserving the arc set.  The digraph is \emph{vertex-transitive} if
$\Aut(\Gamma)$ acts transitively on $V$, \emph{intransitive} otherwise, and
\emph{rigid} if $\Aut(\Gamma)=1$.  We use the word rigid rather than
asymmetric, since ``asymmetric digraph'' is also used in the literature for a
digraph having no pair of oppositely directed arcs.

\subsection{Coherent configurations and association schemes}

Let $\Omega$ be a finite set.  For a binary relation
$R\subseteq\Omega\times\Omega$, its \emph{transpose} is
$ R^{\mathsf T}=\{(y,x):(x,y)\in R\}$.
The \emph{diagonal relation} is denoted by $1_{\Omega}=\{(x,x):x\in\Omega\}$.

\begin{definition}
A \emph{coherent configuration} on the set $\Omega$ is a pair
$\mathcal X=(\Omega,\mathcal R)$, where
$ \mathcal R=\{R_0,R_1,\ldots,R_{r-1}\}$
is a partition of $\Omega\times\Omega$ satisfying the following conditions:
\begin{enumerate}
 \item the diagonal relation $1_{\Omega}$ is a union of members of $\mathcal R$;
 \item for every relation $R_i\in\mathcal R$, its transpose $R_i^{\mathsf T}$ also belongs
       to $\mathcal R$;
 \item for all $i,j,k$ there is a nonnegative integer $p_{ij}^{k}$ such that,
       for every $(x,y)\in R_k$, the number
       $|\{z\in\Omega:(x,z)\in R_i,\ (z,y)\in R_j\}|$ is equal to $p_{ij}^{k}$.
\end{enumerate}
The relations $R_i$ are called the \emph{basis relations}, the numbers
$p_{ij}^{k}$ are the \emph{intersection numbers} (or \emph{structure constants}), and
$r=|\mathcal R|$ is the \emph{rank} of $\mathcal X$.
\end{definition}

Let $A_i$ denote the adjacency matrix of $R_i$.  The third condition is
equivalent to
\[ A_iA_j=\sum_{k=0}^{r-1}p_{ij}^{k}A_k. \]
The vector space $\mathbb C\mathcal X   
=\operatorname{span}_{\mathbb C}\{A_0,A_1,\ldots,A_{r-1}\}$
is the \emph{adjacency algebra} or \emph{coherent algebra} of
$\mathcal X$.  It contains $I$ and $J$ and is closed under ordinary matrix
multiplication, transpose and entrywise (Schur-Hadamard) multiplication.

The diagonal basis relations are called the \emph{fibres} of the coherent
configuration.  A coherent configuration is \emph{homogeneous} if
$1_{\Omega}$ itself is a basis relation, that is, if it has exactly one
fibre.  In this paper an \emph{association scheme} means a homogeneous
coherent configuration.  An association scheme is \emph{commutative} if its
adjacency algebra is commutative, or equivalently if
$ p_{ij}^{k}=p_{ji}^{k}$
for all $i,j,k$; otherwise it is \emph{noncommutative}.

Let $G\leq\operatorname{Sym}(\Omega)$.  The orbits of the componentwise
action of $G$ on $\Omega\times\Omega$ are called the \emph{orbitals} of
$G$ and form a coherent configuration, denoted by $\operatorname{Inv}(G)$.
A coherent configuration is \emph{Schurian} if it is equal to
$\operatorname{Inv}(G)$ for some permutation group $G$.  A Schurian coherent
configuration is homogeneous precisely when the corresponding group is
transitive.  Thus a homogeneous coherent configuration whose full
automorphism group is intransitive is necessarily \emph{non-Schurian}.

An automorphism of $\mathcal X=(\Omega,\mathcal R)$ is a permutation of
$\Omega$ preserving every basis relation.  Its automorphism group is denoted
by $\Aut(\mathcal X)$.  Equivalently,
\[ \Aut(\mathcal X) =\{g\in\operatorname{Sym}(\Omega):P_gA_i=A_iP_g
     \text{ for every }i\},\]
where $P_g$ is the permutation matrix of $g$.

Let $\mathcal X=(\Omega,\{R_0,\ldots,R_{r-1}\})$ be a coherent
configuration.  A union $ R=\cup_{i\in S}R_i$
of basis relations is called a \emph{merging} of relations; its adjacency
matrix is $\sum_{i\in S}A_i$.  We say that a graph or digraph is obtained as
a merging in $\mathcal X$ if its arc relation is such a union.

A \emph{fusion} of $\mathcal X$ is a partition of the basis relations into
classes such that the corresponding unions again form the basis relations of
a coherent configuration.  Thus every fusion is a coherent configuration,
whereas an individual merging need not itself determine a fusion. 

We shall use the following known restriction on the association scheme
approach to proper DSRGs.

\begin{proposition}[{\cite{fkp,kmmz}}]
\label{ranksix}
If a proper directed strongly regular graph is obtained as a nontrivial
merging in an association scheme, then that association scheme is
noncommutative and has rank at least six.
\end{proposition}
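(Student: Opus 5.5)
This proposition is cited from \cite{fkp,kmmz}. The plan is to reconstruct its proof in two stages: first rule out commutative schemes, then bound the rank of a noncommutative scheme admitting a proper DSRG merging.

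\emph{Commutativity.} Let $\Gamma$ be a proper DSRG with adjacency matrix $A=\sum_{i\in S}A_i$, a nontrivial merging in a scheme $\mathcal X$. Suppose $\mathcal X$ is commutative. Then $A$ and $A^{\mathsf T}$ commute, so $A$ is a normal matrix. The intersection numbers $p_{ij}^k=p_{ji}^k$ give $A_iA_j=A_jA_i$, and $A^{\mathsf T}=\sum_{i\in S}A_{i'}$ lies in the algebra, so $AA^{\mathsf T}=A^{\mathsf T}A$. Next, use the standard fact that $AA^{\mathsf T}$ for a DSRG is determined by the parameters. Since $A^{\mathsf T}=A+A^{\mathsf T}-(A\circ A^{\mathsf T})$ is not polynomial in $A$ in general, I will argue spectrally instead. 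Restricted to $\mathbf 1^\perp$, $A$ satisfies $x^2-(\lambda-\mu)x-(t-\mu)=0$. Because $A$ is normal, it is unitarily diagonalizable, and its eigenvalues on $\mathbf 1^\perp$ are roots $\theta,\tau$ of this quadratic. Duval showed these roots are real (integers or conjugate surds). A normal matrix with real spectrum is Hermitian, so $A=A^{\mathsf T}$. This means $t=k$, contradicting properness.

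\emph{Rank bound.} Suppose now $\mathcal X$ is noncommutative. The small-rank cases are handled by known classifications:
\begin{itemize}
\item Schemes of rank at most $5$ are commutative; this is the known theorem that noncommutative schemes have rank at least $6$, since a noncommutative semisimple algebra needs a $2\times2$ matrix block, giving $\dim\ge 1+4+1$.
\item Alternatively, count directly: the algebra decomposes as $\mathbb C\oplus\bigoplus M_{d_j}(\mathbb C)$. The trivial block contributes $1$, and noncommutativity forces some $d_j\ge2$, contributing $4$. The rank then equals $\dim\ge 5$.
\item Rank $5$ with a $\mathbb C\oplus M_2$ structure is impossible, because the valency character and the standard representation multiplicity force one more linear character. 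The standard module $\mathbb C^\Omega$ contains $M_2$ with multiplicity $m\ge 2$, and the trace/$J$ argument (only $J$ spans the trivial character) then shows an additional $1$-dimensional component must exist. This gives rank $\ge6$.
\end{itemize}

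The hardest step is this last exclusion of rank $5$, i.e.\ showing the Wedderburn decomposition $\mathbb C\oplus M_2(\mathbb C)$ cannot occur for an association scheme. For this I would invoke the known result (Hanaki et al.) that the smallest noncommutative schemes have rank $6$, with $\mathbb C\oplus\mathbb C\oplus M_2$ structure, and cite it as in \cite{kmmz}.
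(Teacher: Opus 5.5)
Your commutativity half is correct. It is the standard argument behind this cited result; the paper itself gives no proof. In a commutative scheme $A$ commutes with $A^{\mathsf T}$, and a real normal matrix with real spectrum is symmetric, which forces $t=k$. Two repairs are needed there:
\begin{enumerate}
\item Delete the claim that $AA^{\mathsf T}$ is determined by the parameters, and the ``identity'' $A^{\mathsf T}=A+A^{\mathsf T}-(A\circ A^{\mathsf T})$. Both are false, and you never use them.
\item Say explicitly that realness of $\rho,\sigma$ is exactly where $0<t<k$ enters. For doubly regular tournaments ($t=0$) the roots are non-real. For $0<t<k$, non-real or irrational roots would force $m_\rho=m_\sigma$, and coinciding roots would give $\rho=\sigma=-k/(v-1)$. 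The latter is a rational non-integer algebraic integer, which is impossible. The former gives, via $\operatorname{tr}A=0$, $\mu-\lambda=2k/(v-1)=1$ and $v=2k+1$. Then $k^2+(\mu-\lambda)k=t+(v-1)\mu$ yields $t=k(k+1-2\mu)\in k\mathbb Z$, contradicting $0<t<k$.
\end{enumerate}

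The genuine gap is the rank bound, where none of your three bullets proves anything:
\begin{itemize}
\item The first bullet's count $1+4+1$ presupposes the extra one-dimensional summand that has to be proved.
\item The second bullet only gives rank $\ge 5$.
\item The third asserts the exclusion of $\mathbb C\oplus M_2(\mathbb C)$ without deriving it. The trace relations you allude to only give $\chi(A_i)=-k_i/m$, and this value is compatible with the orthogonality relations, so no contradiction appears.
\end{itemize}
As written, rank $\ge 6$ rests entirely on the citation. Citing that association schemes of rank at most five are commutative is legitimate, but then drop the pseudo-derivations.

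If you want an actual argument, the missing ingredient is integrality. Suppose $\mathbb C\mathcal X\cong\mathbb C\oplus M_2(\mathbb C)$. Let $\chi$ be the degree-two irreducible character, occurring $m\ge 1$ times in $\mathbb C^{\Omega}$; the trivial character occurs once, since $J/n$ projects onto the constants, so $|\Omega|=1+2m$. For $i\neq 0$, taking traces gives $0=\operatorname{tr}A_i=k_i+m\chi(A_i)$. Here $\chi(A_i)$ is the sum of two eigenvalues of the integer matrix $A_i$, so it is an algebraic integer. Being rational, it is an integer, hence $m\mid k_i$ for $i=1,\dots,4$. Therefore $2m=|\Omega|-1=k_1+k_2+k_3+k_4\ge 4m$, a contradiction. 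Ranks at most four are immediate, since $1+4>4$.
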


This lower bound explains the importance of the rank-six closures occurring
in the present paper: they attain the smallest rank compatible with such a
merging.

\subsection{Weisfeiler--Leman closure}

The two-dimensional Weisfeiler--Leman algorithm starts with a coloring of the
ordered pairs of vertices that distinguishes the diagonal, arcs and non-arcs,
and repeatedly refines the coloring according to the numbers of intermediate
vertices of every ordered pair of colors.  The stable partition is a coherent
configuration; see \cite{hi,wl}.  Algebraically, it
has the following equivalent description.

\begin{definition}
Let $\Gamma$ be a graph or digraph with adjacency matrix $A$.  Its
\emph{Weisfeiler--Leman closure}, (WL-closure, for short) 
denoted by $\operatorname{WL}(\Gamma)$, is
the smallest coherent configuration whose adjacency algebra contains $A$.
Equivalently, it is the smallest coherent configuration in which the arc
relation of $\Gamma$ is a union of basis relations.
\end{definition}

The construction is canonical, and hence
$ \Aut(\Gamma)=\Aut(\operatorname{WL}(\Gamma))$.
In particular, every automorphism of $\Gamma$ preserves every basis relation
of its WL-closure.  The closure need not be homogeneous.  If it is
homogeneous, then it is an association scheme.  In this paper the
\emph{WL-rank} of $\Gamma$ means the rank of
$\operatorname{WL}(\Gamma)$.  This terminology avoids confusing the rank of
the coherent configuration with the ordinary linear-algebraic rank of the
adjacency matrix.

If $\Gamma$ is vertex-transitive, then its WL-closure is homogeneous, since
the diagonal color classes are invariant under $\Aut(\Gamma)$.  The converse
is false and is central to this work: our graphs have homogeneous WL-closure
although their automorphism groups may be intransitive or even trivial.

\subsection{Tournaments and doubly regular tournaments}

A \emph{tournament} $T$ is an orientation of a complete graph.  Its adjacency
matrix $B$ satisfies  $B+B^{\mathsf T}=J-I$.
The \emph{reverse tournament} $T^{\mathsf T}$ has all arcs reversed and adjacency
matrix $B^{\mathsf T}$.  A tournament is \emph{regular} if every vertex has
the same out-degree.  A regular tournament necessarily has odd order
$2d+1$ and out-degree $d$.

For a tournament $T$, an \emph{anti-automorphism} is an isomorphism
$T\to T^{\mathsf T}$.  We use the \emph{extended automorphism group}
$ \Aut^{\pm}(T) =\{g\in\operatorname{Sym}(V(T)):
       T^g=T\text{ or }T^g=T^{\mathsf T}\}$.
The subgroup $\Aut(T)$ is normal in $\Aut^{\pm}(T)$ and has index at most
two.  The tournament is \emph{self-reversed} if $T\cong T^{\mathsf T}$.

A tournament $T$ is called \emph{doubly regular}
if it is regular and every two
distinct vertices have the same number of common out-neighbours.
If this common number is $q$, then a doubly regular tournament has order
$4q+3$ and out-degree $2q+1$.  If $B$ is its adjacency matrix, then
\begin{align*}
 BJ=JB&=(2q+1)J, \\
 B+B^{\mathsf T}&=J-I, \\
 BB^{\mathsf T}=B^{\mathsf T}B
   &=(q+1)I+qJ, \\
 B^2&=qB+(q+1)B^{\mathsf T}. 
\end{align*}
Conversely, these identities characterize a doubly regular tournament.  In
the terminology of DSRGs, a doubly regular tournament is a 
DSRG with parameters $(4q+3,2q+1,0,q,q+1)$.

\section{The construction and the infinite family}
\label{sec:main}

In this section we present the main construction.  Starting with an arbitrary
doubly regular tournament, we obtain a directed strongly regular graph whose
WL-closure is a noncommutative association scheme of rank six.
We then prove that the construction preserves intransitivity and apply it to
the Szekeres tournaments.  This yields the infinite family announced in
Theorem~\ref{thm:main}.

\subsection{The doubling construction}

Let $T$ be a doubly regular tournament of order $4q+3$, where $q\geq0$, and
let $B$ be its adjacency matrix.  Put
$n=4q+3$, and  $m=2q+1=(n-1)/2$.
Thus $m$ is the out-valency of $T$.  
Define the $(2n)\times(2n)$ matrix
\[ A=A(B)=
 \begin{pmatrix}
  B&B^{\mathsf T}\\
  B&B^{\mathsf T}
 \end{pmatrix},
\]
and let $\Gamma(T)$ denote the corresponding digraph.  This matrix belongs to
a block construction considered more generally in \cite[Lemma 4.2]{ag}.  

\begin{lemma}(\cite[Lemma 4.2]{ag})
\label{thm:doubling}
Let $T$ be a doubly regular tournament of order $4q+3$ and out-valency
$m=2q+1$.  Then $\Gamma(T)$ is a directed strongly regular graph with
parameters
\[ (v,k,t,\lambda,\mu)=(8q+6,4q+2,2q+1,2q,2q+1)=(4m+2,2m,m,m-1,m). \]
Equivalently, its adjacency matrix satisfies
$ A^2+A=mJ_{2n}$.
\end{lemma}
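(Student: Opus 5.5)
The plan is a direct block-matrix computation using only the four doubly regular tournament identities from the preliminaries. First I check that $A$ is loopless, i.e.\ has zero diagonal. The diagonal blocks are $B$ and $B^{\mathsf T}$, which have zero diagonal, so this holds. The off-diagonal blocks $B^{\mathsf T}$ and $B$ do not meet the global diagonal, so they cause no trouble.

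Next comes regularity. Every row of $A$ is a row of $(B\;B^{\mathsf T})$, and its sum is $m+m=2m$, because $BJ=B^{\mathsf T}J=mJ$. Every column of $A$ is a column of $\binom{B}{B}$ or of $\binom{B^{\mathsf T}}{B^{\mathsf T}}$, and its sum is $2m$ for the same reason. Hence $AJ=JA=2mJ_{2n}$, so $k=2m$ and $v=2n=4m+2$.

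The main step is the square. Write $A=\binom{I}{I}\,(B\;\;B^{\mathsf T})$. Then
\[
A^2=\binom{I}{I}\bigl(B\;\;B^{\mathsf T}\bigr)\binom{I}{I}\bigl(B\;\;B^{\mathsf T}\bigr)
=\binom{I}{I}(B+B^{\mathsf T})\bigl(B\;\;B^{\mathsf T}\bigr),
\]
and $B+B^{\mathsf T}=J-I$. Therefore $A^2=\binom{I}{I}\bigl((J-I)B\;\;(J-I)B^{\mathsf T}\bigr)=\binom{I}{I}\bigl(mJ-B\;\;mJ-B^{\mathsf T}\bigr)=mJ_{2n}-A$. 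This gives $A^2+A=mJ_{2n}$. Notably, only regularity and $B+B^{\mathsf T}=J-I$ are used here, not the double regularity.

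Finally, I translate this into DSRG parameters. We have $A^2=mJ-A=mI+(m-1)A+m(J-I-A)$, which matches $tI+\lambda A+\mu(J-I-A)$ with $t=m$, $\lambda=m-1$, $\mu=m$. Substituting $m=2q+1$ gives $(8q+6,4q+2,2q+1,2q,2q+1)$. The only point needing care is that the identity holds entrywise, including the diagonal: there $(A^2)_{xx}=m$ because $A_{xx}=0$, which confirms $t=m$. Properness, $0<m<2m$, is automatic for $q\ge0$.

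There is no real obstacle; the only thing to get right is the factorization $A=\binom{I}{I}(B\;B^{\mathsf T})$, which collapses the computation to the single identity $B+B^{\mathsf T}=J-I$.
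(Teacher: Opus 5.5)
Your proof is correct. The paper gives no argument of its own for this lemma; it simply cites \cite[Lemma 4.2]{ag}. Your factorization $A=\binom{I}{I}\bigl(B\;\;B^{\mathsf T}\bigr)$ reduces $A^2$ to $\binom{I}{I}(J-I)\bigl(B\;\;B^{\mathsf T}\bigr)=mJ_{2n}-A$. That makes your argument a complete, self-contained check of the stated identity $A^2+A=mJ_{2n}$, together with looplessness and the degree conditions. Your side remark is also accurate and worth keeping: only regularity and $B+B^{\mathsf T}=J-I$ are used. Hence the doubling gives a DSRG with parameters $(4m+2,2m,m,m-1,m)$ from any regular tournament of order $2m+1$, which fits the paper's comment that \cite{ag} treats the block construction more generally. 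Double regularity only matters later, for the rank-six closure. There it enters through $B^2=qB+(q+1)B^{\mathsf T}$ in the identity $R_2R_4=qR_2+(q+1)R_3$ used in the proof of Theorem~\ref{thm:wl}.
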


The vertex set of $\Gamma(T)$ will be written as $ V(T)\times\{0,1\}$.
The two sets $V(T)\times\{0\}$ and $V(T)\times\{1\}$ are called the
\emph{natural layers}.  Notice that $(x,0)$ and $(x,1)$ have the same
out-neighbourhood, but they are distinguished by their in-neighbourhoods.

Here we will concentrate on the coherent
closure and automorphism group of $\Gamma(T)$.

\subsection{The rank-six Weisfeiler--Leman closure}

Consider the following six $(0,1)$-matrices of order $2n$:
\[
\begin{gathered}
R_0=\begin{pmatrix}I&0\\0&I\end{pmatrix},
\qquad
R_1=\begin{pmatrix}0&I\\I&0\end{pmatrix},
\qquad
R_2=\begin{pmatrix}0&B^{\mathsf T}\\B&0\end{pmatrix},\\[1mm]
R_3=\begin{pmatrix}0&B\\B^{\mathsf T}&0\end{pmatrix},
\qquad
R_4=\begin{pmatrix}B&0\\0&B^{\mathsf T}\end{pmatrix},
\qquad
R_5=\begin{pmatrix}B^{\mathsf T}&0\\0&B\end{pmatrix}.
\end{gathered}
\]
They are pairwise disjoint and satisfy
 $R_0+R_1+R_2+R_3+R_4+R_5=J_{2n}$.

\begin{proposition}
\label{prop:rank}
The matrices $R_0,R_1,\ldots,R_5$ are the adjacency matrices of a
homogeneous noncommutative association scheme $\mathcal X(T)$ of rank six.
\end{proposition}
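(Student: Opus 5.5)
We verify the defining conditions of a coherent configuration directly from the block form of the matrices, using only the doubly regular tournament identities recorded in the preliminaries. First, the $R_i$ are $(0,1)$-matrices: $I$, $B$, $B^{\mathsf T}$ are pairwise disjoint, since $B+B^{\mathsf T}=J-I$. They sum to $J_{2n}$, as already noted. Moreover $R_0$ is the identity of order $2n$, so $1_\Omega$ is a single basis relation and homogeneity is immediate.

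Closure under transposition is also immediate from the blocks. We have $R_1^{\mathsf T}=R_1$ and $R_2^{\mathsf T}=R_2$, because $\begin{pmatrix}0&B^{\mathsf T}\\B&0\end{pmatrix}$ is symmetric. Likewise $R_3^{\mathsf T}=R_3$, while $R_4^{\mathsf T}=R_5$. None of the $R_i$ is empty as long as $n\ge 3$, i.e.\ $q\ge 0$, so the rank is exactly six.

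The main step is closure of the span $\mathcal A=\operatorname{span}\{R_0,\dots,R_5\}$ under multiplication. I would organise this by writing each matrix in block form as $\begin{pmatrix}X&Y\\Z&W\end{pmatrix}$. The span of the $R_i$ is then precisely the set of block matrices in which
\begin{itemize}
\item the diagonal blocks have the form $(X,W)=(aI+bB+cB^{\mathsf T},\ aI+bB^{\mathsf T}+cB)$, and
\item the off-diagonal blocks have the form $(Y,Z)=(dI+eB^{\mathsf T}+fB,\ dI+eB+fB^{\mathsf T})$.
\end{itemize}
In both cases the second block is obtained from the first by the linear map $\sigma$ on $\operatorname{span}\{I,B,B^{\mathsf T}\}$ that swaps $B$ and $B^{\mathsf T}$.

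The key algebraic fact is that $\mathcal T=\operatorname{span}\{I,B,B^{\mathsf T}\}$ is a commutative algebra. Indeed $B^2=qB+(q+1)B^{\mathsf T}$, and since $B^{\mathsf T}=J-I-B$ and $BJ=JB$, everything is a polynomial in $B$ and $J$. Furthermore $\sigma$ is an algebra automorphism of $\mathcal T$: it is transposition restricted to the commutative algebra $\mathcal T$, and transposition reverses products. Using this, the product of two block matrices of the above shape has diagonal blocks $XX'+YZ'$ and $ZY'+WW'$. Here $WW'=\sigma(X)\sigma(X')=\sigma(XX')$ and $ZY'=\sigma(Y)\sigma^{-1}\cdots$; more carefully, $Z=\sigma(Y)$ and $Y'=\sigma(Z')$ in the off-diagonal pattern, so $ZY'=\sigma(Y)\sigma(Z')=\sigma(YZ')$ by commutativity. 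The off-diagonal blocks $XY'+YW'$ and $ZX'+WZ'$ behave in the same way. Hence $\mathcal A$ is closed under products, and the structure constants are nonnegative integers, since they arise as coefficients in the product of $0/1$ basis matrices that partition $J$. Closure under the Schur product holds trivially because the $R_i$ partition $J$. This bookkeeping with $\sigma$ is where errors are most likely, so I would double-check it by computing a couple of products explicitly, e.g.\ $R_2R_4$.

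Finally, for noncommutativity it suffices to exhibit one pair of non-commuting basis matrices. We compute
\[
R_1R_4=\begin{pmatrix}0&B^{\mathsf T}\\B&0\end{pmatrix}=R_2,
\qquad
R_4R_1=\begin{pmatrix}0&B\\B^{\mathsf T}&0\end{pmatrix}=R_3,
\]
and $R_2\neq R_3$ because $B\ne B^{\mathsf T}$. So the scheme is noncommutative, which is consistent with \Cref{ranksix}.
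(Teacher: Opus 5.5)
Your proof is correct and follows the same direct verification as the paper: you check that the matrices partition $J_{2n}$ with $R_0=I_{2n}$, that they are closed under transposition, that products close using the commutative algebra $\operatorname{span}\{I,B,B^{\mathsf T}\}$, and that $R_1R_4=R_2\neq R_3=R_4R_1$. Your description of the span as block matrices whose lower blocks are the images under the transpose automorphism $\sigma$ of the upper blocks supplies the step the paper compresses into ``the position of the nonzero blocks''. Block positions and membership of the blocks in $\operatorname{span}\{I,J,B,B^{\mathsf T}\}$ alone would not rule out a matrix such as $\operatorname{diag}(B,B)$, which lies outside the span, so your $\sigma$-twist is the argument actually needed there.
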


\begin{proof}
The matrices form a partition of $J_{2n}$ with $R_0=I_{2n}$.  They are closed under transpose:
$ R_i^{\mathsf T}=R_i\quad(0\leq i\leq3)$,
$R_4^{\mathsf T}=R_5$.
Every product $R_iR_j$ is a block matrix whose nonzero blocks belong to
$\operatorname{span}_{\mathbb C}\{I,J,B,B^{\mathsf T}\}$.
It is a routine-job to check that this space
is closed under multiplication.  The position of the nonzero blocks then
shows that every $R_iR_j$ belongs to
$\operatorname{span}_{\mathbb C}\{R_0,\ldots,R_5\}$.  Hence the six
relations form a homogeneous coherent configuration.
The adjacency algebra is not commutative.  Indeed,
$R_1R_4\neq R_4R_1$.
\end{proof}

The adjacency matrix of $\Gamma(T)$ is the merging $A=R_2+R_4$.
Thus Proposition~\ref{prop:rank} gives a rank-six association
scheme containing the graph.  We next prove that no proper fusion of this
scheme contains $A$.

\begin{theorem}
\label{thm:wl}
For every doubly regular tournament $T$, the Weisfeiler--Leman closure of
$\Gamma(T)$ is precisely $\mathcal X(T)$.  In particular,
$ \operatorname{rank}\operatorname{WL}(\Gamma(T))=6$.
\end{theorem}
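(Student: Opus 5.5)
Since $A=R_2+R_4$ lies in the adjacency algebra of $\mathcal X(T)$ (Proposition~\ref{prop:rank}), minimality of the WL-closure gives $\operatorname{WL}(\Gamma(T))\preceq\mathcal X(T)$: every basis relation of $\operatorname{WL}(\Gamma(T))$ is a union of the $R_i$. So it suffices to show the reverse, namely that each $R_i$ lies in the coherent algebra $W$ generated by $A$. We will obtain the six matrices from $A$ using only the closure operations of a coherent algebra (products, transposes, Schur products, and $I,J\in W$).

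The steps, in order, are as follows.
\begin{enumerate}
\item Since $I\in W$, we have $R_0\in W$.
\item Set $A^{\mathsf T}=R_3+R_5\in W$. The complement $J-I-A-A^{\mathsf T}=R_1$ lies in $W$, because the six matrices partition $J$. This uses that $R_2,R_3,R_4,R_5$ are pairwise disjoint, so $A$ and $A^{\mathsf T}$ are disjoint $(0,1)$-matrices.
\item Compute the block product
\[ R_1A=\begin{pmatrix}B&B^{\mathsf T}\\B&B^{\mathsf T}\end{pmatrix}=A . \]
This shows that left multiplication by $R_1$ alone does not separate anything. Instead we use
\[ AR_1=\begin{pmatrix}B^{\mathsf T}&B\\B^{\mathsf T}&B\end{pmatrix}=R_3+R_4 . \]
\item Take the Schur product $A\circ(AR_1)=(R_2+R_4)\circ(R_3+R_4)=R_4$. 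Then $R_2=A-R_4$, $R_5=R_4^{\mathsf T}$, and $R_3=R_2^{\mathsf T}$ all lie in $W$.
\end{enumerate}

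Hence all $R_i\in W$, so $\mathcal X(T)\preceq\operatorname{WL}(\Gamma(T))$. The two configurations coincide, and the rank is six. Homogeneity is inherited from Proposition~\ref{prop:rank}.

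The only point needing care is the block computation of $AR_1$: right multiplication by $R_1$ swaps the two block columns, turning $(B\;B^{\mathsf T})$ into $(B^{\mathsf T}\;B)$ in each block row. Everything else is immediate. This argument does not use double regularity except through Proposition~\ref{prop:rank}, which guarantees that $\mathcal X(T)$ is coherent.
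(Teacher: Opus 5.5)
\textbf{There is a genuine gap.} Your reduction is correct and is also how the paper begins: $\operatorname{WL}(\Gamma(T))$ is a fusion of $\mathcal X(T)$, so it suffices to show that each $R_i$ lies in the coherent algebra $W$ generated by $A$. Steps 2--4, however, rest on a transposition error. Both $R_2$ and $R_3$ are symmetric, so $R_3\neq R_2^{\mathsf T}$, and
\[
A^{\mathsf T}=R_2^{\mathsf T}+R_4^{\mathsf T}=R_2+R_5=\begin{pmatrix}B^{\mathsf T}&B^{\mathsf T}\\ B&B\end{pmatrix},
\]
not $R_3+R_5$. So $A$ and $A^{\mathsf T}$ are not disjoint: $A\circ A^{\mathsf T}=R_2$. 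These are the $t=m>0$ undirected edges at each vertex, and disjointness would force $t=0$. Consequently $J-I-A-A^{\mathsf T}=R_1+R_3-R_2$, and step 2 does not produce $R_1$.

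In step 3 the matrix you compute, $\begin{pmatrix}B^{\mathsf T}&B\\ B^{\mathsf T}&B\end{pmatrix}$, equals $R_3+R_5$, not $R_3+R_4$. Hence step 4 gives
\[
A\circ(AR_1)=(R_2+R_4)\circ(R_3+R_5)=0,
\]
because $B\circ B^{\mathsf T}=0$. Step 3 also relied on $R_1\in W$ from the failed step 2. So the argument as written recovers none of $R_1,\dots,R_5$.

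What is missing is a way to separate $R_1$ from $R_3$. The correct elementary operations give $R_2=A\circ A^{\mathsf T}$, $R_4=A-R_2$ and $R_5=A^{\mathsf T}-R_2$. After that, only the union is available:
\[
R_1+R_3=J-I-R_2-R_4-R_5=\begin{pmatrix}0&I+B\\ I+B^{\mathsf T}&0\end{pmatrix}.
\]
Splitting this union requires computing a product and using a counting property of $T$. The paper uses $B^2=qB+(q+1)B^{\mathsf T}$ to get $R_2R_4=qR_2+(q+1)R_3$, hence $R_3=\frac{1}{q+1}(R_2R_4-qR_2)$, and then $R_1$ by complement.

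Alternatively, $BB^{\mathsf T}=B^{\mathsf T}B=(q+1)I+qJ$ gives $AA^{\mathsf T}=2(q+1)(R_0+R_1)+2qJ_{2n}$, which yields $R_1$. After that your idea does work: $AR_1=R_3+R_5$, $(AR_1)\circ A^{\mathsf T}=R_5$, and the remaining relations follow. Your closing remark that double regularity enters only through Proposition~\ref{prop:rank} rests on the faulty steps. Splitting $R_1+R_3$ is exactly where the paper uses the structure of $T$.
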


\begin{proof}
Since $A\in\mathbb C\mathcal X(T)$, the WL-closure of $\Gamma(T)$ is a fusion
of $\mathcal X(T)$.  Conversely, we recover all six basis matrices using
operations under which every coherent algebra is closed.  First,
$A\circ A^{\mathsf T}=R_2$.
By the defining relations it follows that
$R_4=A-R_2$, $R_5=A^{\mathsf T}-R_2$.
Furthermore, 
$R_2R_4=qR_2+(q+1)R_3$,
and hence
$ R_3=\frac{1}{q+1}(R_2R_4-qR_2)$.
Finally, $R_0=I_{2n}$ and
$R_1=J_{2n}-R_0-R_2-R_3-R_4-R_5$.
Thus every basis matrix of $\mathcal X(T)$ belongs to the coherent algebra
generated by $A$, proving the assertion.
\end{proof}

By Proposition~\ref{ranksix}, rank six is the smallest possible
rank of a noncommutative association scheme in which a proper DSRG can occur
as a nontrivial merging.  The construction therefore attains the general
lower bound for every choice of the initial doubly regular tournament.

\subsection{Automorphisms and intransitivity}

The rank-six closure also makes it possible to relate the automorphisms of
$\Gamma(T)$ to those of $T$.  The relation $R_1$ is a perfect matching whose
classes are
\begin{equation}
 \{(x,0),(x,1)\},\qquad x\in V(T).
 \label{pairs}
\end{equation}
Since every automorphism of a graph preserves every basis relation of its
WL-closure, $\Aut(\Gamma(T))$ preserves this system of matched pairs.  Its
induced action on the set of pairs may preserve the two components of $R_4$,
or interchange them.  In the first case it induces an automorphism of $T$;
in the second it induces an isomorphism $T\to T^{\mathsf T}$.  Consequently,
the induced permutation group is a subgroup of $\Aut^{\pm}(T)$.

We first record a simple group-theoretic observation.

\begin{lemma}
\label{ext}
Let $T$ be a doubly regular tournament.  If $\Aut^{\pm}(T)$ is transitive on $V(T)$, then
$\Aut(T)$ is transitive on $V(T)$.
\end{lemma}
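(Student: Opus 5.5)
The plan is to use only elementary facts about orbits of a normal subgroup, combined with the parity of $n=|V(T)|$. Put $G=\Aut^{\pm}(T)$ and $H=\Aut(T)$. As noted in the preliminaries, $H$ is normal in $G$ with $[G:H]\le 2$. If $[G:H]=1$, then $H=G$ is transitive and there is nothing to prove, so I will assume $[G:H]=2$.

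The key step is the standard observation that when a group $G$ acts transitively on a set $V$, the orbits of a normal subgroup $H$ form a $G$-invariant partition (a block system). Concretely, for $g\in G$ and $x\in V$ we have
\[
g(Hx)=(gHg^{-1})\,gx=H\,gx,
\]
so $G$ permutes the $H$-orbits. Because $G$ is transitive on $V$, it is also transitive on the set of $H$-orbits. Hence all $H$-orbits have the same size, and the number of them divides $|V|$. Moreover, $G/H$ acts transitively on the set of $H$-orbits, and $H$ itself fixes each orbit. Therefore the number of $H$-orbits is at most $[G:H]=2$.

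It remains to rule out exactly two $H$-orbits. In that case the two orbits would have equal size, so $|V(T)|$ would be even. But $|V(T)|=4q+3$ is odd. Hence $H$ has a single orbit, that is, $\Aut(T)$ is transitive on $V(T)$.

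I do not expect any real obstacle here. The only point needing care is the claim that the orbits of a normal subgroup of a transitive group have equal size, which follows from the displayed computation above.
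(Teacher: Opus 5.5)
Your proof is correct and follows essentially the same route as the paper. The $H$-orbits are permuted transitively by $G$, so they have equal size and there are at most $[G:H]\le 2$ of them. The odd order $4q+3$ then rules out two orbits. Your explicit computation $g(Hx)=H\,gx$ just makes precise the normality step that the paper states in words.
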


\begin{proof}
Put $ G=\Aut^{\pm}(T)$, $ H=\Aut(T).$
Then $H\trianglelefteq G$ and $[G:H]\leq2$.  Since $H$ is normal, the group
$G$ permutes the $H$-orbits transitively; hence they all have the same size.
Moreover, the number of $H$-orbits divides $[G:H]$, so it is either one or
two.  Every doubly regular tournament has odd order, and therefore its vertex set cannot be
the union of two equal-sized $H$-orbits.  Thus $H$ has one orbit and is
transitive.
\end{proof}

\begin{theorem}
\label{intrans}
If a doubly regular tournament $T$ is not vertex-transitive, then
$\Gamma(T)$ is not vertex-transitive.
\end{theorem}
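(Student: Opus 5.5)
We argue by contraposition: assuming $\Gamma(T)$ is vertex-transitive, we show that $T$ is vertex-transitive. The argument uses the rank-six closure from Theorem~\ref{thm:wl}. Since $\Aut(\Gamma(T))=\Aut(\operatorname{WL}(\Gamma(T)))=\Aut(\mathcal X(T))$, every automorphism $g$ of $\Gamma(T)$ preserves each basis relation $R_0,\ldots,R_5$. In particular, $g$ preserves the perfect matching $R_1$, so it permutes the pairs in \eqref{pairs}. Hence $g$ induces a permutation $\bar g$ of $V(T)$, defined by $\bar g(x)=y$ whenever $g$ maps $\{(x,0),(x,1)\}$ onto $\{(y,0),(y,1)\}$. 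The map $g\mapsto\bar g$ is a group homomorphism from $\Aut(\Gamma(T))$ to $\operatorname{Sym}(V(T))$.

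The first step is to verify that $\bar g\in\Aut^{\pm}(T)$. The relation $R_4\cup R_5$ lies within layers. Its two components are $B$ on layer $0$ and $B^{\mathsf T}$ on layer $1$ for $R_4$, and the reverse for $R_5$. The relation $R_0\cup R_1\cup R_4\cup R_5$ is an equivalence relation whose classes are the natural layers; this follows from the block form. Since $g$ preserves each $R_i$, it either fixes both layers or swaps them.

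Suppose $g$ fixes both layers, and let $x\to y$ in $T$. Then $((x,0),(y,0))\in R_4$, and $g$ maps this pair to $((\bar g x,0),(\bar g y,0))\in R_4$, so $\bar g x\to\bar g y$. Thus $\bar g\in\Aut(T)$. If instead $g$ swaps the layers, the same pair is mapped to $((\bar g x,1),(\bar g y,1))\in R_4$, which means $\bar g y\to\bar g x$. Thus $\bar g$ is an anti-automorphism of $T$. In both cases $\bar g\in\Aut^{\pm}(T)$, which is exactly the remark made before Lemma~\ref{ext}, now made precise.

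The second step passes transitivity down. If $\Aut(\Gamma(T))$ is transitive on $V(T)\times\{0,1\}$, then for any $x,y\in V(T)$ some $g$ maps $(x,0)$ to $(y,0)$. Then $\bar g(x)=y$, so the image group, which is contained in $\Aut^{\pm}(T)$, is transitive on $V(T)$. Therefore $\Aut^{\pm}(T)$ is transitive, and Lemma~\ref{ext} shows that $\Aut(T)$ is transitive. This contradicts the hypothesis that $T$ is not vertex-transitive.

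The only delicate point is the first step. One must check carefully that preservation of the individual relations $R_0,\ldots,R_5$ forces the layer structure to be respected. It is not immediate that $g$ maps layers to layers rather than mixing the two points of each pair across layers in an inconsistent way. This is handled by observing that the layers are the classes of the equivalence relation $R_0\cup R_1\cup R_4\cup R_5$, which every automorphism of $\mathcal X(T)$ preserves. After that, the orientation check on $R_4$ is direct.
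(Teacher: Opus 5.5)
Your route is the paper's route: $R_1$ gives the matched pairs, the induced group lies in $\Aut^{\pm}(T)$, and Lemma~\ref{ext} finishes the proof. You make explicit the step the paper leaves as a remark before Lemma~\ref{ext}.

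However, the step you single out as delicate rests on a false claim. The relation $R_0\cup R_1\cup R_4\cup R_5$ has adjacency matrix $\left(\begin{smallmatrix} J & I\\ I & J\end{smallmatrix}\right)$, and this is not an equivalence relation. For $y\neq x$ you have $(x,0)\sim(x,1)$ and $(x,1)\sim(y,1)$, but the pair $((x,0),(y,1))$ lies in $R_2\cup R_3$. Its transitive closure relates every pair of vertices, so it does not have the natural layers as its classes, and it cannot show that $g$ respects the layers.

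The fix is to drop $R_1$. Since $I+B+B^{\mathsf T}=J$, the relation $R_0\cup R_4\cup R_5$ has matrix $\left(\begin{smallmatrix} J & 0\\ 0 & J\end{smallmatrix}\right)$. This is the equivalence relation whose classes are exactly the two layers. Every automorphism of $\mathcal X(T)$ preserves it, so each $g$ fixes both layers or swaps them. From there your orientation check on $R_4$ and the transitivity argument go through as written.

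The paper phrases the same point through the components of $R_4$. Its underlying undirected graph is two disjoint copies of $K_n$, one on each layer, carrying $T$ and $T^{\mathsf T}$ respectively. An automorphism must therefore either preserve these two components or interchange them.
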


\begin{proof}
Suppose that $\Aut(\Gamma(T))$ were transitive.  Since it preserves the
matched pairs in \eqref{pairs}, its induced group on these
pairs would also be transitive.  As observed above, this induced group is a
subgroup of $\Aut^{\pm}(T)$.  Therefore $\Aut^{\pm}(T)$ would be transitive,
and Lemma~\ref{ext} would imply that $\Aut(T)$ is
transitive, a contradiction.
\end{proof}

\begin{corollary}
\label{nsc}
If $T$ is not vertex-transitive, then $\mathcal X(T)$ is a non-Schurian
association scheme.
\end{corollary}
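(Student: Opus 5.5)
The proof will be by contradiction, and it rests on three facts already established: the scheme $\mathcal X(T)$ is homogeneous (Proposition~\ref{prop:rank}), it is exactly the WL-closure of $\Gamma(T)$ (Theorem~\ref{thm:wl}), and $\Gamma(T)$ is not vertex-transitive when $T$ is not (Theorem~\ref{intrans}).

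Suppose $\mathcal X(T)$ were Schurian, so $\mathcal X(T)=\operatorname{Inv}(G)$ for some permutation group $G\leq\operatorname{Sym}(V(T)\times\{0,1\})$. The argument then runs in three steps.

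\textbf{Step 1: $G$ is transitive.} The orbitals of $G$ are the basis relations of $\mathcal X(T)$. Since $\mathcal X(T)$ is homogeneous, the diagonal $1_\Omega=R_0$ is a single orbital of $G$. This says exactly that $G$ acts transitively on $\Omega=V(T)\times\{0,1\}$, matching the remark in the preliminaries that a Schurian configuration is homogeneous precisely when its group is transitive.

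\textbf{Step 2: $G\leq\Aut(\Gamma(T))$.} Every $g\in G$ fixes each of its own orbitals, so $g$ preserves every basis relation $R_i$. Hence $G\leq\Aut(\mathcal X(T))$. By Theorem~\ref{thm:wl}, $\mathcal X(T)=\operatorname{WL}(\Gamma(T))$, and $\Aut(\operatorname{WL}(\Gamma(T)))=\Aut(\Gamma(T))$. Alternatively, one can see this directly: the arc set of $\Gamma(T)$ is $R_2\cup R_4$, which every such $g$ preserves.

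\textbf{Step 3: contradiction.} Combining the two steps, $\Aut(\Gamma(T))$ contains the transitive group $G$ and is therefore transitive. This contradicts Theorem~\ref{intrans}, since $T$ is assumed not to be vertex-transitive. Hence $\mathcal X(T)$ is non-Schurian.

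No step here is a real obstacle. The only point needing care is Step 2, where we use that a group fixes each of its own orbitals.
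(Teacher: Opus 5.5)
Your proof is correct and follows essentially the same route as the paper: a Schurian structure would give a transitive group $G$ with $G\leq\Aut(\mathcal X(T))=\Aut(\Gamma(T))$, which contradicts Theorem~\ref{intrans}. Your direct observation that $G$ preserves the arc set $R_2\cup R_4$ is a nice touch, since it shows this step does not even need the identification $\mathcal X(T)=\operatorname{WL}(\Gamma(T))$.
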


\begin{proof}
By Theorem~\ref{intrans}, the group
$\Aut(\Gamma(T))$ is intransitive.  On the other hand,
$\mathcal X(T)=\operatorname{WL}(\Gamma(T))$ is homogeneous and
$ \Aut(\mathcal X(T))=\Aut(\Gamma(T))$.
If $\mathcal X(T)$ were Schurian, its automorphism group would contain a
transitive group whose orbitals are its basis relations, contradicting the
intransitivity of $\Aut(\mathcal X(T))$.
\end{proof}

\subsection{The Szekeres tournaments}

We now recall the doubly regular tournaments of Szekeres type
\cite{ito,sz}.  Let $p\equiv5\pmod8$ be a prime, $p\geq13$,
and let $H_4$ be the subgroup of fourth powers in
$\mathbb Z_p^{\times}$.  Its four cosets are denoted by
$H_4$, $2H_4$, $4H_4$, $8H_4$.
For these primes, $-H_4=4H_4$, $-2H_4=8H_4$.

The \emph{Szekeres tournament} $S_p$ has vertex set
$V(S_p)=\bigl(\mathbb Z_p\times\{0,1\}\bigr)\sqcup\{\infty\}$. 
Thus, \((a,0)\) and \((a,1)\) denote the vertices corresponding to
\(a\in\mathbb Z_p\) in the first and second copies, respectively, and 
$\infty$ is an additional symbol.
The out-neighbourhoods are defined by the following
\begin{align*}
N^+_{S_p}\bigl((a,0)\bigr) &= \bigl(a+(H_4\cup2H_4)\bigr)\times\{0\}
 \;\cup\;\{(a,1)\} \;\cup\; \bigl(a+(H_4\cup8H_4)\bigr)\times\{1\}, \\
N^+_{S_p}\bigl((a,1)\bigr) &= \bigl(a+(H_4\cup8H_4)\bigr)\times\{0\}
 \;\cup\;\{\infty\} \;\cup\; \bigl(a+(4H_4\cup8H_4)\bigr)\times\{1\}, \\
N^+_{S_p}(\infty)  &=\mathbb Z_p\times\{0\}.
\end{align*}
Szekeres \cite{sz} proved that $S_p$ is a doubly regular tournament of order $2p+1$,
out-valency $p$, and common-out-neighbour number $(p-1)/2$.  
Translations act simultaneously on its two field copies 
$(a,i)\longmapsto(a+c,i)$, where  $i\in\{0,1\}$ and $c\in\mathbb Z_p$;
and fix $\infty$. 
The full automorphism group was determined by Ito \cite{ito}.  
In particular, the following fact is essential here.

\begin{proposition}[\cite{ito}]
\label{prop:Szekeres-intransitive}
For every prime $p\equiv5\pmod8$, $p\geq13$, the Szekeres tournament $S_p$
is not vertex-transitive.
\end{proposition}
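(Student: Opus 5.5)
The plan is to show that no automorphism of $S_p$ maps $\infty$ to a vertex of $\mathbb Z_p\times\{0,1\}$. It then suffices to find an isomorphism invariant of vertices that takes a different value at $\infty$.

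For a vertex $x$ of a doubly regular tournament, the natural candidate is the subtournament $T_x$ induced on the out-neighbourhood $N^+(x)$, which has order $p$. Since $S_p$ is doubly regular, each $T_x$ is regular of out-degree $(p-1)/2$. This is too coarse, so I would use a finer invariant: the number of cyclic triangles, or better the number of $4$-vertex transitive subtournaments, inside $N^+(x)$. Equivalently, one can use the score multiset of the tournament induced on $N^+(x)\cap N^+(y)$ as $y$ ranges over $N^+(x)$.

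For $x=\infty$ we have $N^+(\infty)=\mathbb Z_p\times\{0\}$, and the induced tournament is the Cayley tournament on $\mathbb Z_p$ with connection set $H_4\cup2H_4$. This is a circulant, so every vertex of $T_\infty$ has isomorphic local structure. For $x=(a,i)$, translations reduce the computation to $a=0$. The neighbourhood $N^+((0,0))$ consists of $(0,1)$ together with pieces of both layers, so it mixes layers.

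The key step is to compute a distinguishing count. For example, for each $y\in N^+(x)$, count $|N^+(x)\cap N^+(y)|$ inside $T_x$. In $T_\infty$ this is constant, because the circulant on $\mathbb Z_p$ with connection set $D=H_4\cup 2H_4$ satisfies $|D\cap(D+d)|$ constant in $d\in D$ only if $D$ is a difference set. I would therefore check whether $D$ is a difference set using cyclotomic numbers of order $4$ for $p\equiv5\pmod 8$. I expect $D$ to be a difference set only in degenerate cases. If $D$ is not one, the $d$-dependence makes the count non-constant, which alone does not distinguish $\infty$. The argument then needs a comparison of the full multiset of these counts between $T_\infty$ and $T_{(0,0)}$, computed with the order-$4$ cyclotomic numbers $(i,j)_4$, which are expressed through $p=s^2+4t^2$.

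The main obstacle is this cyclotomic bookkeeping, together with showing that the resulting multisets differ for every admissible $p\ge13$. The bound $p\ge13$ is needed because $p=5$ may be vertex-transitive. If the direct computation becomes unwieldy, the fallback is a group-theoretic argument. Suppose $\Aut(S_p)$ were transitive. It contains the translation group of order $p$, which has orbits $\mathbb Z_p\times\{0\}$, $\mathbb Z_p\times\{1\}$ and $\{\infty\}$. The multipliers by fourth powers $a\mapsto ha$ also preserve the tournament, giving a subgroup of order $p(p-1)/4$. A transitive group of degree $2p+1$ containing an element of order $p$ with two $p$-cycles would have to be $2$-transitive or of a restricted form by the classification of primitive groups containing such elements. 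One would then rule these out: $2$-transitivity is impossible for a tournament, and the remaining affine and almost-simple cases would be excluded by order arguments. This is essentially Ito's route, and I would cite it for the case analysis.
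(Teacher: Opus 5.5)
The paper gives no argument for this proposition; it simply quotes Ito. Your proposal also ends by deferring to Ito. What needs checking, then, are your two self-contained routes, and neither is completed.

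\textbf{The local-invariant route.} The one invariant you actually evaluate, $|N^+(x)\cap N^+(y)|$ for $y\in N^+(x)$, is just the out-degree of $y$ in $T_x$. By double regularity it equals $(p-1)/2$ for all $x,y$. Similarly, the number of cyclic triangles in the regular tournament $T_x$ is determined by its score sequence. The difference-set question is moot: a $(p,(p-1)/2,\lambda)$ difference set would force $\lambda=(p-3)/4$, which is impossible for $p\equiv1\pmod4$. So $|D\cap(D+d)|$ is never constant, and in any case it concerns only $T_\infty$.

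The finer invariant you mention in passing does work at $p=13$: take the score sequence of the tournament induced on $N^+(x)\cap N^+(y)$, for $y\in N^+(x)$. For $x=\infty$ every $y$ gives $(2,2,2,3,3,3)$, while $x=(0,0)$, $y=(2,0)$ gives $(2,2,2,2,3,4)$. But you would have to show that such a discrepancy occurs for every prime $p\equiv5\pmod8$, $p\ge13$. That is a uniform computation with cyclotomic numbers of order $4$, which you explicitly leave undone, and it is the entire content of the statement.

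\textbf{The group-theoretic fallback.} It skips the facts that make it work:
\begin{enumerate}
\item An automorphism group of a tournament has odd order, hence is solvable by Feit--Thompson.
\item A transitive overgroup of the translations is primitive. The translations fix every block of a nontrivial block system, so the block through $\infty$ would have size $p+1$, and $p+1\nmid 2p+1$.
\item Hence such a group would be affine and $2p+1$ a prime power. This already settles cases such as $p=37$.
\end{enumerate}

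Your last step, however, fails. For $p=13$ we have $2p+1=27$. The Paley tournament $QR_{27}$ is a doubly regular tournament admitting the transitive group $\{x\mapsto ax^{3^i}+b\}$, with $a$ a nonzero square. The stabiliser of $0$ in this group has order $39=p(p-1)/4$. It acts on the two sets of $13$ remaining points exactly as your translations and $H_4$-multipliers act on the two layers of $S_{13}$. Even the out-neighbourhood of a vertex of $QR_{27}$ is $\mathrm{Cay}(\mathbb Z_{13},\{1,2,3,5,6,9\})\cong T_\infty$.

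Hence no order argument can rule out the affine case at $p=13$. The group analysis only reduces transitivity of $\Aut(S_{13})$ to $S_{13}\cong QR_{27}$. Refuting that needs exactly the combinatorial comparison the fallback was meant to avoid.
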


\subsection{The main result}

Let $B_p$ be the adjacency matrix of $S_p$, and define
$A_p=\left(\begin{smallmatrix}
B_p & B_p^{\mathsf T}\\
B_p & B_p^{\mathsf T}
\end{smallmatrix}\right).$
We denote the corresponding digraph by $\Gamma_p$.

\begin{theorem}
\label{thm:main-infinite-family}
For every prime $p\equiv5\pmod8$, $p\geq13$, the graph $\Gamma_p$ is an
intransitive directed strongly regular graph with parameters
$(v,k,t,\lambda,\mu)=(4p+2,2p,p,p-1,p)$.
Its Weisfeiler--Leman closure is a homogeneous noncommutative association
scheme of rank six, and this scheme is non-Schurian.  Consequently, the
graphs $\Gamma_p$ form an infinite family of intransitive DSRGs with
homogeneous WL-closure of the smallest possible noncommutative rank.
\end{theorem}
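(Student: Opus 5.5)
The proof is an assembly of the general results of this section, specialised to $T=S_p$. First we fix the parameters. By Szekeres' theorem, $S_p$ is a doubly regular tournament of order $n=2p+1$ and out-valency $m=p$. Its common-out-neighbour number is $q=(p-1)/2$, which is consistent with $n=4q+3$ and $m=2q+1$. Since $A_p=A(B_p)$, Lemma~\ref{thm:doubling} shows that $\Gamma_p=\Gamma(S_p)$ is a DSRG with parameters $(4m+2,2m,m,m-1,m)=(4p+2,2p,p,p-1,p)$. It is proper because $0<t=p<k=2p$.

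Next we identify the closure. Theorem~\ref{thm:wl} applies to every doubly regular tournament, so $\operatorname{WL}(\Gamma_p)=\mathcal X(S_p)$. By Proposition~\ref{prop:rank}, this is a homogeneous noncommutative association scheme of rank six. The noncommutativity needs no extra work for this particular $B_p$: for every $B$ one computes
\[
R_1R_4=\begin{pmatrix}0&B^{\mathsf T}\\B&0\end{pmatrix}=R_2,
\qquad
R_4R_1=\begin{pmatrix}0&B\\B^{\mathsf T}&0\end{pmatrix}=R_3,
\]
and $R_2\neq R_3$ because $B\neq B^{\mathsf T}$.

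For intransitivity, Proposition~\ref{prop:Szekeres-intransitive} says that $S_p$ is not vertex-transitive. Theorem~\ref{intrans} then shows that $\Gamma_p$ is intransitive, and Corollary~\ref{nsc} shows that $\mathcal X(S_p)$ is non-Schurian. The "smallest possible rank" claim comes from Proposition~\ref{ranksix}. Since $\Gamma_p$ is proper and $A_p=R_2+R_4$ is a nontrivial merging in $\mathcal X(S_p)$, any association scheme containing it as a merging has rank at least six, so rank six is optimal.

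Finally, infinitude follows from Dirichlet's theorem. There are infinitely many primes $p\equiv5\pmod 8$, and all but finitely many satisfy $p\geq13$. Distinct $p$ give distinct orders $4p+2$, so the graphs $\Gamma_p$ are pairwise non-isomorphic.

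The only step with real content is the intransitivity, which rests entirely on Ito's determination of $\Aut(S_p)$. This is imported as Proposition~\ref{prop:Szekeres-intransitive}. If one wanted to avoid Ito, the plan would be to show that $\infty$ is fixed by $\Aut^{\pm}(S_p)$, for example through some local invariant such as the isomorphism type of the subtournament induced on $N^+(\infty)$ compared with that of other vertices. That would be the main obstacle. Given the cited result, the remaining steps are direct verifications.
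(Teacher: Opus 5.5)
Your proposal is correct and follows the paper's proof essentially step for step. The parameters come from Lemma~\ref{thm:doubling} with $m=p$, the closure from Theorem~\ref{thm:wl} and Proposition~\ref{prop:rank}, intransitivity from Ito's result via Theorem~\ref{intrans}, non-Schurity from Corollary~\ref{nsc}, and infinitude from Dirichlet's theorem. Your explicit computation $R_1R_4=R_2\neq R_3=R_4R_1$ and your use of Proposition~\ref{ranksix} for the optimality of rank six are useful details that the paper only asserts or leaves implicit.
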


\begin{proof}
The tournament $S_p$ has order $2p+1$ and out-valency $p$.  Applying
Lemma~\ref{thm:doubling} with $m=p$ gives the stated  parameters.  By
Theorem~\ref{thm:wl}, the WL-closure of $\Gamma_p$ is the
homogeneous noncommutative rank-six scheme $\mathcal X(S_p)$.
Proposition~\ref{prop:Szekeres-intransitive} and
Theorem~\ref{intrans} show that $\Gamma_p$ is
intransitive.  Its WL-closure is therefore non-Schurian by
Corollary~\ref{nsc}.  Finally, \emph{Dirichlet's theorem} on
primes in arithmetic progressions guarantees that there are infinitely many
primes congruent to $5$ modulo $8$.
\end{proof}

The first members of this infinite family have the following parameters:
(54,26,13,12,13), (118,58,29,28,29), (150,74,37,36,37) for 
$p=13$, 29, 37, respectively.  

\subsection{Rigid examples}
\label{rigid}

The analysis of the smallest known intransitive DSRGs whose
WL-closure is a homogeneous association scheme of rank six,
namely the examples of orders $30$ and $38$, revealed that both arise from
the doubling construction of Section~\ref{sec:main}.  The
underlying doubly regular tournaments have orders $15$ and $19$,
respectively.

A brief search through the catalogue of doubly regular tournaments produced
further, sporadic examples.  The smallest rigid examples found in this way
arise from doubly regular tournaments of order $23$.
The doubling construction gives
a DSRG with parameters $(46,22,11,10,11)$.
The complete catalogue~(see \cite{mc}) contains $37$ pairwise nonisomorphic doubly regular
tournaments of order $23$.  In the ordering used in
McKay's data file, representatives $2$ and $37$ have trivial automorphism
group and are not isomorphic to their reverses.  Consequently, their
extended automorphism groups are trivial.  Since every automorphism of the
doubled graph induces an automorphism or an anti-automorphism of the
underlying tournament, the corresponding DSRGs are rigid.  Nevertheless,
their WL-closure is a homogeneous noncommutative association
schemes of rank six. These examples show that group-theoretic symmetry,
combinatorial regularity, and WL-regularity are genuinely independent aspects
of directed strongly regular graphs.

\section*{Acknowledgements}
The author acknowledges support from the Slovak Scientific Grant Agency VEGA under Grants No.~1/0069/23 and No.~1/0011/25, and from the Slovak Research and Development Agency under Contracts No.~APVV-22-0005 and No.~APVV-23-0076.
The author would also like to thank Professor Mikhail Klin for many years of fruitful collaboration in Algebraic Graph Theory.


\end{document}